\theoremstyle{remark}
\newtheorem{remark}{Remark}
\newtheorem{theorem}{Theorem}
\newtheorem{example}{Example}
\def\Item$#1${\item {\hfill $\displaystyle#1$ \hfill\refstepcounter{equation}(\theequation)}}
\begin{document}

\title{No Perfect Cuboid}
\author{Walter Wyss}
\date{}
\maketitle

\begin{abstract}
A rectangular parallelepiped is called a cuboid (standing box). It is called perfect if its edges, face diagonals and body diagonal all have integer length. Euler gave an example where only the body diagonal failed to be an integer (Euler brick). Are there perfect cuboids? We prove that there is no perfect cuboid.
\end{abstract}

\section{Introduction}

Cuboids have been studied extensively. It suffices to look at rational cuboids \cite{leech77,leech81}. Rational cuboids are characterized by seven positive rational numbers (three different edges, three different face diagonals and the body diagonal). Examples are known where all but one of the seven quantities are rational. 
Our approach uses the concept of a rational leaning box. This is a parallelepiped with two different rectangular faces and a face that is a parallelogram. Rational leaning boxes are characterized by nine positive rational numbers (three different edges, two different face diagonals belonging to the rectangular faces, two different face diagonals belonging to the face parallelogram and two different body diagonals).
If the parallelogram face becomes a rectangle, then we have a standing box. Computer aided discoveries have shown the existence of perfect leaning boxes \cite{sawyer, benjamin, rathbun}.
We found a two-parameter family of solutions for rational leaning boxes analytically. The two diagonals of the face parallelogram can never be equal. Thus there is no standing rational box in this family. Finally we use an equivalent description of leaning boxes to show that in general there is no perfect cuboid. \\
In the appendices we use generic symbols which do not necessarily coincide with the ones used in the main text.

\section{The equations for the perfect leaning box}

All the following nine quantities are positive integers. \\
x,y,z, denote the three different edges.
The face rectangle (x,y) has diagonal a, the face rectangle (x,z) 
has diagonal b and the face parallelogram (y,z) has diagonals $c_1, c_2$. 
The two different body diagonals are denoted by $d_1, d_2$. \\

These quantities satisfy the equations \\

\begin{align}
x^2+y^2&=a^2 \\
x^2+z^2&=b^2 \\
x^2+c_1^2&=d_1^2 \\
x^2+c_2^2&=d_2^2 \\
2y^2+2z^2&=c_1^2+c_2^2
\end{align}

The last equation represents a perfect parallelogram \cite{wyss12, wyss14} \\

\section{Parameterization for the rational leaning box}

We look for solutions of the equations (1), (2), (3), (4), (5) in rational positive numbers. \\

We now scale these equations as follows

\begin{align}
&u_1=\frac{y}{x},& &u_2=\frac{z}{x},& &u_3=\frac{c_1}{x},& &u_4=\frac{c_2}{x}& \\
&v_1=\frac{a}{x},& &v_2=\frac{b}{x},& &v_3=\frac{d_1}{x},& &v_4=\frac{d_2}{x}&
\end{align} \\

Then $u_k$ and $v_k$, k=1,2,3,4, are positive rational numbers. \\

The scaled equations are

\begin{align}
1+u_1^2 &= v_1^2 \\
1+u_2^2 &= v_2^2 \\
1+u_3^2 &= v_3^2 \\
1+u_4^2 &= v_4^2 \\
2u_1^2+2u_2^2 &= u_3^2+u_4^2
\end{align} \\

The last equation represents a rational parallelogram (Appendix C) \\

The scaled equations can be parameterized by the four Heron angles ${\psi}_k$ and their generators $s_k$,
k=1,2,3,4 (Appendix A) as follows

\begin{align}
u_k=\cot{\psi}_k=\frac{1-s_k^2}{2s_k}=\frac{1}{2}(\frac{1}{s_k}-s_k) \\
v_k=\frac{1}{\sin{\psi_k}}=\frac{1+s_k^2}{2s_k}=\frac{1}{2}(\frac{1}{s_k}+s_k) \\
v_k-u_k=s_k, v_k+u_k=\frac{1}{s_k}, 0<s_k<1
\end{align}

\section{The three parallelograms}

Besides the face parallelogram there are two interior parallelograms \\

\begin{align}
&\text{I. } &&2u_1^2+2u_2^2 = u_3^2+u_4^2&& \\
&\text{II. } &&2u_1^2+2v_2^2 = v_3^2+v_4^2&& \\
&\text{III. } &&2v_1^2+2u_2^2 = v_3^2+v_4^2&& 
\end{align}

From Appendix (D.32) we have the representation

\begin{align}
&\text{I. } &2u_1 &= u_3{\omega}_+({\alpha}) + u_4{\omega}_-({\alpha}) \\
&&2u_2 &= -u_3{\omega}_-({\alpha}) + u_4{\omega}_+({\alpha}) \\
&\text{II. } &2u_1 &= v_3{\omega}_+({\alpha}_1) + v_4{\omega}_-({\alpha}_1) \\
&&2v_2 &= -v_3{\omega}_-({\alpha}_1) + v_4{\omega}_+({\alpha}_1) \\
&\text{III. } &2v_1 &= v_3{\omega}_+({\alpha}_2) + v_4{\omega}_-({\alpha}_2) \\
&&2u_2 &= -v_3{\omega}_-({\alpha}_2) + v_4{\omega}_+({\alpha}_2)
\end{align}

with m as the generator of ${\alpha}$ \\
\begin{align}
m = \frac{2u_2+u_3-u_4}{2u_1+u_3+u_4}
\end{align}

and with $m_1$ as the generator of ${\alpha}_1$ \\
\begin{align}
m_1 = \frac{2v_2+v_3-v_4}{2u_1+v_3+v_4}
\end{align}

and with $m_2$ as the generator of ${\alpha}_2$ \\
\begin{align}
m_2 = \frac{2u_2+v_3-v_4}{2v_1+v_3+v_4}
\end{align}

${\alpha},{\alpha}_1,{\alpha}_2$ are Heron angles in the first quadrant. \\

In terms of generators $s_3, s_4$, with \\

\begin{align}
Q=s_3s_4 
\end{align} \\

we get from Appendix E the representation

\begin{align}
&\text{I. } &4Qu_1 &= s_4M({\alpha})+s_3H({\alpha}) \\
&&4Qu_2 &= -s_4K({\alpha})+s_3N({\alpha}) \\
&\text{II. } &4Qu_1 &= s_4N({\alpha}_1)+s_3K({\alpha}_1) \\
&&4Qv_2 &= -s_4H({\alpha}_1)+s_3M({\alpha}_1) \\
&\text{III. } &4Qv_1 &= s_4N({\alpha}_2)+s_3K({\alpha}_2) \\
&&4Qu_2 &= -s_4H({\alpha}_2)+s_3M({\alpha}_2)
\end{align} \\

Comparing Equations (29-32) and using (15) we find \\

\begin{align}
0 &= s_4[M({\alpha})-N({\alpha}_1)] + s_3[H({\alpha})-K({\alpha}_1)] \\
8Qu_1 &= s_4[M({\alpha})+N({\alpha}_1)] + s_3[H({\alpha})+K({\alpha}_1)] \\
4Qs_2 &= s_4[K({\alpha})-H({\alpha}_1)] - s_3[N({\alpha})-M({\alpha}_1)] \\
4Q\frac{1}{s_2} &= -s_4[K({\alpha})+H({\alpha}_1)] + s_3[N({\alpha})+M({\alpha}_1)]
\end{align} \\

With

\begin{align*}
{\alpha}+{\alpha}_1 &= 2{\sigma}_1,& {\alpha}-{\alpha}_1 &= 2{\delta}_1 \\
{\omega}_+({\sigma}_1) &= \sqrt{2}\cos{\psi},& {\omega}_-({\sigma}_1) &= \sqrt{2}\sin{\psi} \\
\sqrt{2}\cos{\delta}_1 &= {\omega}_+({\alpha}+{\psi}),& \sqrt{2}\sin{\delta}_1 &= -{\omega}_-({\alpha}+{\psi})
\end{align*} \\

and Appendix E (Lemma 7) Equations (35-38) become 

\begin{align}
0 &= s_3\cos{\psi}H({\alpha}+{\psi})-s_4\sin{\psi}K({\alpha}+{\psi}) \\
4Qu_1 &= s_4\cos{\psi}M({\alpha}+{\psi})+s_3\sin{\psi}N({\alpha}+{\psi}) \\
2Qs_2 &= s_4\cos{\psi}K({\alpha}+{\psi})+s_3\sin{\psi}H({\alpha}+{\psi}) \\
2Q\frac{1}{s_2} &= -s_4\sin{\psi}M({\alpha}+{\psi})+s_3\cos{\psi}N({\alpha}+{\psi})
\end{align}

or in matrix form \\

\begin{align}
\left(
    \begin{array}{cc}
      s_4\sin{\psi} & -s_3\cos{\psi} \\
      s_4\cos{\psi} & s_3\sin{\psi}
    \end{array}
\right)
\left(
    \begin{array}{c}
      K({\alpha}+{\psi}) \\
      H({\alpha}+{\psi})
    \end{array}
\right) = 2Q
\left(
    \begin{array}{c}
      0 \\
      s_2
    \end{array}
\right) 
\end{align}

\begin{align}
\left(
    \begin{array}{cc}
      s_4\cos{\psi} & s_3\sin{\psi} \\
      -s_4\sin{\psi} & s_3\cos{\psi}
    \end{array}
\right)
\left(
    \begin{array}{c}
      M({\alpha}+{\psi}) \\
      N({\alpha}+{\psi})
    \end{array}
\right) = 2Q
\left(
    \begin{array}{c}
      2u_1 \\
      \frac{1}{s_2}
    \end{array}
\right) 
\end{align} \\

The inverse equations are

\begin{align}
\left(
    \begin{array}{c}
      K({\alpha}+{\psi}) \\
      H({\alpha}+{\psi})
    \end{array}
\right) =
\left(
    \begin{array}{cc}
      s_3\sin{\psi} & s_3\cos{\psi} \\
      -s_4\cos{\psi} & s_4\sin{\psi}
    \end{array}
\right)
\left(
    \begin{array}{c}
      0 \\
      2s_2
    \end{array}
\right) 
\end{align}

\begin{align}
\left(
    \begin{array}{c}
      M({\alpha}+{\psi}) \\
      N({\alpha}+{\psi})
    \end{array}
\right) =
\left(
    \begin{array}{cc}
      s_3\cos{\psi} & -s_3\sin{\psi} \\
      s_4\sin{\psi} & s_4\cos{\psi}
    \end{array}
\right)
\left(
    \begin{array}{c}
      4u_1 \\
      \frac{2}{s_2}
    \end{array}
\right) 
\end{align} \\

Explicitly \\

\begin{align}
K({\alpha}+{\psi}) &= 2s_2s_3\cos{\psi} \\
H({\alpha}+{\psi}) &= 2s_2s_4\sin{\psi} \\
M({\alpha}+{\psi}) &= 4u_1s_3\cos{\psi} - 2\frac{s_3}{s_2}\sin{\psi} \\
N({\alpha}+{\psi}) &= 4u_1s_4\sin{\psi} + 2\frac{s_4}{s_2}\cos{\psi}
\end{align} \\

Comparing equations 29,30,33,34 and using (15) we find \\

\begin{align}
0&=s_3[N({\alpha})-M({\alpha_2})] - s_4[K({\alpha})-H({\alpha_2})] \\
8Qu_2&=s_3[N({\alpha})+M({\alpha_2})] - s_4[K({\alpha})+H({\alpha_2})] \\
4Qs_1&=s_4[N({\alpha_2})-M({\alpha})] + s_3[K({\alpha_2})-H({\alpha})] \\
4Q\frac{1}{s_1}&=s_4[N({\alpha_2})+M({\alpha})] + s_3[K({\alpha_2})+H({\alpha})]
\end{align} \\

With \\

\begin{align*}
{\alpha}+{\alpha}_2 &= 2{\sigma}_2,& {\alpha}-{\alpha}_2 &= 2{\delta}_2 \\
{\omega}_+({\sigma}_2) &= \sqrt{2}\cos{\phi},& {\omega}_-({\sigma}_2) &= \sqrt{2}\sin{\phi} \\
\sqrt{2}\cos{\delta}_2 &= {\omega}_+({\alpha}+{\phi}),& \sqrt{2}\sin{\delta}_2 &= -{\omega}_-({\alpha}+{\phi})
\end{align*} \\

and Appendix E (Lemma 7), equations (51-54) become \\

\begin{align}
0&=-s_4\cos{\phi}K({\alpha}+{\phi})-s_3\sin{\phi}H({\alpha}+{\phi}) \\
4Qu_2&=-s_4\sin{\phi}M({\alpha}+{\phi})+s_3\cos{\phi}N({\alpha}+{\phi}) \\
2Qs_1&=s_4\sin{\phi}K({\alpha}+{\phi})-s_3\cos{\phi}H({\alpha}+{\phi}) \\
2Q\frac{1}{s_1}&=s_4\cos{\phi}M({\alpha}+{\phi})+s_3\sin{\phi}N({\alpha}+{\phi})
\end{align} \\

or in matrix form

\begin{align}
\left(
    \begin{array}{cc}
      -s_4\cos{\phi} & -s_3\sin{\phi} \\
      s_4\sin{\phi} & -s_3\cos{\phi}
    \end{array}
\right)
\left(
    \begin{array}{c}
      K({\alpha}+{\phi}) \\
      H({\alpha}+{\phi})
    \end{array}
\right) = 
\left(
    \begin{array}{c}
      0 \\
      2Qs_1
    \end{array}
\right) 
\end{align}

\begin{align}
\left(
    \begin{array}{cc}
      -s_4\sin{\phi} & s_3\cos{\phi} \\
      s_4\cos{\phi} & s_3\sin{\phi}
    \end{array}
\right)
\left(
    \begin{array}{c}
      M({\alpha}+{\phi}) \\
      N({\alpha}+{\phi})
    \end{array}
\right) = 
\left(
    \begin{array}{c}
      4Qu_2 \\
      2Q\frac{1}{s_1}
    \end{array}
\right) 
\end{align} \\

The inverse equations are \\

\begin{align}
\left(
    \begin{array}{c}
      K({\alpha}+{\phi}) \\
      H({\alpha}+{\phi})
    \end{array}
\right) =
\left(
    \begin{array}{cc}
      -s_3\cos{\phi} & s_3\sin{\phi} \\
      -s_4\sin{\phi} & -s_4\cos{\phi}
    \end{array}
\right)
\left(
    \begin{array}{c}
      0 \\
      2s_1
    \end{array}
\right) 
\end{align}

\begin{align}
\left(
    \begin{array}{c}
      M({\alpha}+{\phi}) \\
      N({\alpha}+{\phi})
    \end{array}
\right) =
\left(
    \begin{array}{cc}
      -s_3\sin{\phi} & s_3\cos{\phi} \\
      s_4\cos{\phi} & s_4\sin{\phi}
    \end{array}
\right)
\left(
    \begin{array}{c}
      4u_2 \\
      2\frac{1}{s_1}
    \end{array}
\right) 
\end{align} \\

Explicitly \\

\begin{align}
K({\alpha}+{\phi}) &= 2s_1s_3\sin{\phi} \\
H({\alpha}+{\phi}) &= -2s_1s_4\cos{\phi} \\
M({\alpha}+{\phi}) &= -4u_2s_3\sin{\phi} + 2\frac{s_3}{s_1}\cos{\phi} \\
N({\alpha}+{\phi}) &= 4u_2s_4\cos{\phi} + 2\frac{s_4}{s_1}\sin{\phi}
\end{align} \\

Finally, comparing equations (31-34), we get from equations (37,38) \\

\begin{align}
4Qu_2 &= -s_4K({\alpha})+s_3N({\alpha}) \\
4Qv_2 &= -s_4H({\alpha_1})+s_3M({\alpha_1})
\end{align} \\

and from equations (53,54) \\

\begin{align}
4Qu_1 &= s_4M({\alpha})+s_3H({\alpha}) \\
4Qv_1 &= s_4N({\alpha_2})+s_3K({\alpha_2})
\end{align} \\

Thus we are left with the equations \\

\begin{align*}
4Qu_2 &= -s_4K({\alpha})+s_3N({\alpha}) \\
4Qu_1 &= s_4M({\alpha})+s_3H({\alpha}) \\
4Qu_2 &= -s_4H({\alpha_2})+s_3M({\alpha_2}) \\
4Qu_1 &= s_4N({\alpha_1})+s_3K({\alpha_1})
\end{align*} \\

or

\begin{align*}
-s_4K({\alpha})+s_3N({\alpha}) &= -s_4H({\alpha_2})+s_3M({\alpha_2}) \\
s_4M({\alpha})+s_3H({\alpha}) &= s_4N({\alpha_1})+s_3K({\alpha_1})
\end{align*} \\

But these are precisely equations (51,35). Thus there are only the equations (47-50) and (63-66). \\

\begin{remark}
Equation 50 follows from the equations (47-49) and the identity (E.22) \\
\end{remark}

\begin{proof}
\begin{align*}
N({\alpha}+{\psi}) &= \frac{4Q+H({\alpha}+{\psi})M({\alpha}+{\psi})}{K({\alpha}+{\psi})} \\
&= \frac{4Q+4Q[2u_1s_2\sin{{\psi}}\cos{{\psi}}-\sin^2{{\psi}}]}{2s_2s_3\cos{{\psi}}} \\
&= \frac{4Q\cos{{\psi}}[2u_1s_2\sin{{\psi}}+\cos{{\psi}}]}{2s_2s_3\cos{{\psi}}} \\
&= 2u_12s_4\sin{{\psi}}+2\frac{s_4}{s_2}\cos{{\psi}}
\end{align*} \\
\end{proof} 

\begin{remark}
Equation 66 follows from equations (63-65) and the identity (E.22) \\
\end{remark} 

\begin{proof}
\begin{align*}
N({\alpha}+{\phi}) &= \frac{4Q+H({\alpha}+{\phi})M({\alpha}+{\phi})}{K({\alpha}+{\phi})} \\
&= \frac{4Q-2s_1s_4\cos{{\phi}}[-4u_2s_3\sin{{\phi}}+2\frac{s_3}{s_1}\cos{{\phi}}]}{2s_1s_3\sin{{\phi}}} \\
&= 4u_2s_4\cos{{\phi}}+\frac{2s_4}{s_1}\sin{{\phi}}
\end{align*} \\
\end{proof} 

\begin{remark}
Equations (47,48,63,64) result in the relation \\
\begin{align}
s_1s_2=\tan{({\phi}-{\psi})}
\end{align} \\
\end{remark}

\begin{proof}
\begin{align*}
&K({\alpha}+{\psi})H({\alpha}+{\phi})-K({\alpha}+{\phi})H({\alpha}+{\psi}) \\
=&-2s_2s_3\cos{{\psi}}2s_1s_4\cos{{\phi}}-2s_1s_3\sin{{\phi}}2s_2s_4\sin{{\psi}} \\
=&-4Qs_1s_2\cos{({\phi}-{\psi})}
\end{align*} \\

From the identity (E.24) we get \\

\begin{align*}
&K({\alpha}+{\psi})H({\alpha}+{\phi})-K({\alpha}+{\phi})H({\alpha}+{\psi}) \\
=&4Q\sin{({\psi}-{\phi})}
\end{align*} \\

and then \\

\begin{align*}
s_1s_2\cos{({\phi}-{\psi})}=\sin{({\phi}-{\psi})}
\end{align*} \\

\end{proof}

\begin{remark}
Equations (47,63) together with equation 71 result in equation 65 \\
\end{remark}

\begin{proof}
\begin{align*}
\text{let } \hspace{0.5cm} &{\phi}-{\psi}={\delta}, \hspace{0.25cm} {\psi}={\phi}-{\delta} \\
\text{Then } \hspace{0.5cm} &K({\alpha}+{\psi}) = K({\alpha}+{\phi}-{\delta}) = K({\alpha}+{\phi})\cos{{\delta}}+M({\alpha}+{\phi})\sin{{\delta}} \\
\text{and } \hspace{0.5cm}
&2s_2s_3\cos{({\phi}-{\delta})} = \cos{{\delta}}K({\alpha}+{\phi})+\sin{{\delta}}M({\alpha}+{\phi}) \\
&2s_2s_3[\cos{{\phi}}+\sin{{\phi}}\tan{{\delta}}] = 2s_1s_3\sin{{\phi}}+\tan{{\delta}}M({\alpha}+{\phi}) \\
&2s_2s_3\cos{{\phi}}+2s_1s_3\sin{{\phi}}s_2^2 = 2s_1s_3\sin{{\phi}}+s_1s_2M({\alpha}+{\phi}) \\
&2s_2s_3\cos{{\phi}} - s_1s_2M({\alpha}+{\phi}) = 2s_1s_3\sin{{\phi}}[1-s_2^2] \\
&2\frac{s_3}{s_1}\cos{{\phi}}-M({\alpha}+{\phi}) = 4s_3\sin{{\phi}}u_2
\end{align*} \\
\end{proof}

\section{The equations for the rational leaning box} 

The unknowns are \\

\begin{align}
s_1,s_2,s_3,s_4 ; \hspace{0.25cm} 0<s_k<1 \\
u_k=\frac{1-s_k^2}{2s_k}, \hspace{0.25cm} Q=s_3s_4 
\end{align} \\

\begin{align}
\begin{split}
&\text{The parameters are ${\alpha}$ Heron angle with generator m, ${\psi}$ Euler} \\
&\text{angle and $u_1$} 
\end{split}
\end{align}  \\

Using Appendix E the equations (47-50) result in the relations \\

\begin{align}
K({\alpha})&=2s_3[s_2\cos^2{{\psi}}+\sin{{\psi}}\{2u_1\cos{{\psi}}-\frac{1}{s_2}\sin{{\psi}}\}] \\
N({\alpha})&=2s_4[-s_2\sin^2{{\psi}}+\cos{{\psi}}\{2u_1\sin{{\psi}}+\frac{1}{s_2}\cos{{\psi}}\}] \\
H({\alpha})&=2s_4[s_2\sin{{\psi}}\cos{{\psi}}+\sin{{\psi}}\{2u_1\sin{{\psi}}+\frac{1}{s_2}\cos{{\psi}}\}] \\
M({\alpha})&=2s_3[-s_2\sin{{\psi}}\cos{{\psi}}+\cos{{\psi}}\{2u_1\cos{{\psi}}-\frac{1}{s_2}\sin{{\psi}}\}]
\end{align} \\

These relations reproduce equations (29-30) \\

\begin{align}
s_3H({\alpha})+s_4M({\alpha})&=4Qu_1 \\
s_3N({\alpha})-s_4K({\alpha})&=4Qu_2
\end{align} \\

With \\

\begin{align}
{\lambda}=\tan{{\psi}} 
\end{align} \\

the independent equations for the rational leaning box become \\

\begin{align}
{\omega}_-({\alpha}) - {\lambda}{\omega}_+({\alpha}) &= s_2s_3+{\lambda}s_2s_4 \\
Q[{\omega}_+({\alpha}) + {\lambda}{\omega}_-({\alpha})] &= s_2s_3-{\lambda}s_2s_4 \\
2u_1[{\omega}_-({\alpha}) - {\lambda}{\omega}_+({\alpha})] + s_4-{\lambda}s_3 &= s_2[{\omega}_+({\alpha}) + {\lambda}{\omega}_-({\alpha})]
\end{align} \\

From equations (82-84) with \\

\begin{align}
{\lambda}=0 
\end{align} \\

the equations for the rational leaning box thus become \\

\begin{align}
s_2s_3&={\omega}_-({\alpha}) \\
s_2s_3&=Q{\omega}_+({\alpha}) \\
s_2{\omega}_+({\alpha})&=2u_1{\omega}_-({\alpha})+s_4
\end{align} \\
 
\begin{theorem}
For ${\lambda}=0$ the solutions of the equations for the rational leaning box are given by the
two rational parameters $s_1$, m, where \\

\begin{align}
0<s_1<1, \hspace{0.25cm} u_1=\frac{1-s_1^2}{2s_1} ,
\end{align}

and ${\alpha}$ has the generator m,

\begin{align}
0<{\alpha}<\frac{{\pi}}{4}, \hspace{0.25cm} 0<m<\sqrt{2}-1
\end{align}

as follows

\begin{align}
s_2&=2u_1\cot{(2{\alpha})} \\
s_3&=\frac{{\omega}_-({\alpha})}{s_2} \\
s_4&=\frac{s_2}{{\omega}_+({\alpha})} 
\end{align} \\

We also have to respect the inequality (D.13) \\

\end{theorem}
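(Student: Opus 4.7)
The plan is to directly solve the three reduced equations (86)--(88) for $s_2$, $s_3$, $s_4$ in terms of the two free parameters $s_1$ (equivalently $u_1$) and $\alpha$ (with generator $m$). These are three equations in three unknowns, so the system should be straightforwardly invertible without needing any further identities beyond those already established.

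First, I would read off $s_3$ directly from equation (86): $s_2 s_3 = \omega_-(\alpha)$ gives $s_3 = \omega_-(\alpha)/s_2$, which is exactly (92) once $s_2$ is determined. Next, equation (87) reads $Q\,\omega_+(\alpha) = s_2 s_3$ with $Q = s_3 s_4$; cancelling the nonzero factor $s_3$ yields $s_4 = s_2/\omega_+(\alpha)$, namely (93). Substituting this value of $s_4$ into equation (88) eliminates both $s_3$ and $s_4$ and leaves a single scalar equation for $s_2$, of the form $s_2(\omega_+^2(\alpha) - 1) = 2 u_1 \omega_-(\alpha)\omega_+(\alpha)$. Applying the Heron-angle identities $\omega_+^2(\alpha) - 1 = \sin(2\alpha)$ and $\omega_+(\alpha)\omega_-(\alpha) = \cos(2\alpha)$ from Appendix A, this collapses to $s_2 = 2 u_1 \cot(2\alpha)$, which is (91). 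Back-substitution into the first two formulas then reproduces (92) and (93), so the three claimed expressions indeed solve the system.

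Second, I would verify the range conditions. The positivity of the three derived values requires $\omega_+(\alpha)$, $\omega_-(\alpha)$, and $\cot(2\alpha)$ to be positive simultaneously, which happens precisely on the interval $0 < \alpha < \pi/4$. Translating through the Heron generator $m = \tan(\alpha/2)$, this interval corresponds to $0 < m < \tan(\pi/8) = \sqrt{2} - 1$, matching the stated ranges. The condition $0 < s_1 < 1$ is imposed directly on the remaining free parameter and gives $u_1 > 0$.

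I expect the main obstacle to be the upper bounds $s_k < 1$ together with the geometric feasibility condition (D.13). The derivation above only shows that $s_2, s_3, s_4$ are positive rationals; ensuring that all three lie strictly below $1$ and that the resulting tuple actually corresponds to a valid rational leaning box requires an additional inequality linking $s_1$ and $m$, which is precisely what the reference to (D.13) is meant to supply. Once that is granted, the rest of the statement is essentially bookkeeping: one rescales via equations (6)--(7) and substitutes back through the Heron parameterization (12)--(14) to recover an integer leaning box.
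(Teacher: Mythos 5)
Your proposal is correct and follows essentially the same route as the paper's own proof: solve (86) for $s_3$, cancel $s_3$ in (87) to get $s_4=s_2/\omega_+(\alpha)$, then substitute into (88) and use $\omega_+^2(\alpha)-1=\sin(2\alpha)$ and $\omega_+(\alpha)\omega_-(\alpha)=\cos(2\alpha)$ to obtain $s_2=2u_1\cot(2\alpha)$, with the range $0<\alpha<\pi/4$ (equivalently $0<m<\sqrt{2}-1$) guaranteeing positivity. The only trivial slip is attributing those $\omega$-identities to Appendix A rather than Appendix C (C.5, C.6); your closing observation that the bounds $s_k<1$ and (D.13) need separate attention matches the paper, which likewise leaves them as side conditions rather than proving them.
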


\begin{proof}
Equation 92 follows from equation 86. \\
Equation 87 reads \\

\begin{align*}
s_2=s_4{\omega}_+({\alpha})
\end{align*}

and gives equation 93. \\

From equation 88 we get \\

\begin{align*}
s_2{\omega}_+^2({\alpha}) = 2u_1{\omega}_-({\alpha}){\omega}_+({\alpha})+s_4{\omega}_+({\alpha})
\end{align*}

or 

\begin{align*}
s_2[{\omega}_+^2({\alpha})-1] = 2u_1{\omega}_-({\alpha}){\omega}_+({\alpha})
\end{align*} \\

resulting in 

\begin{align*}
s_2\sin{(2{\alpha})}=2u_1\cos{(2{\alpha})}
\end{align*} \\

Equation 90 assures that ${\omega}_-({\alpha}) > 0$ \\

\end{proof}

\begin{theorem}
For ${\lambda}=0$ the cuboid limit $s_4=s_3$ is impossible. Thus there is no perfect cuboid in this family.
\end{theorem}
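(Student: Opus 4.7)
I would proceed by contradiction; suppose $s_3 = s_4$. Substituting the formulas $s_3 = \omega_-(\alpha)/s_2$ and $s_4 = s_2/\omega_+(\alpha)$ from Theorem 1, the equality gives
\[
s_2^2 = \omega_+(\alpha)\omega_-(\alpha) = \cos(2\alpha),
\]
the second equality being the identity $(\cos\alpha + \sin\alpha)(\cos\alpha - \sin\alpha) = \cos(2\alpha)$ that $\omega_\pm$ satisfy for a Heron angle.

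Next I would combine this with the remaining formula $s_2 = 2u_1\cot(2\alpha)$. Squaring the latter and eliminating $s_2^2$ yields the quadratic
\[
\cos^2(2\alpha) + 4u_1^2\cos(2\alpha) - 1 = 0,
\]
whose relevant root (positive, since $0<\alpha<\pi/4$ forces $\cos(2\alpha)\in(0,1)$) is $\cos(2\alpha) = \sqrt{1+4u_1^4} - 2u_1^2$. Since $u_1 = y/x$ is rational and $\cos(2\alpha)$ is rational (because $\alpha$ is a Heron angle with rational generator $m$, so $\sin\alpha$ and $\cos\alpha$ are both rational), this forces $\sqrt{1+4u_1^4}$ to be rational. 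Writing $u_1 = p/q$ in lowest terms with $p, q$ positive integers, and observing that a positive integer is a rational square only if it is an integer square, the condition reduces to the Diophantine equation
\[
q^4 + 4p^4 = r^2
\]
admitting a positive integer solution.

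The main obstacle is this Diophantine step, but it is a classical application of Fermat's method of infinite descent. Rewriting the equation as $(q^2)^2 + (2p^2)^2 = r^2$ exhibits a Pythagorean triple; a short parity analysis exploiting $\gcd(p,q)=1$ allows one to assume it primitive, and the standard parameterization then forces $q^2 = M^2 - N^2$ and $p^2 = MN$ with $\gcd(M,N)=1$. Coprimality implies $M = a^2$ and $N = b^2$ for coprime integers $a, b$, giving $q^2 = a^4 - b^4$. Fermat's classical theorem that $X^4 - Y^4 = Z^2$ has no solutions with $XY \neq 0$ then yields the contradiction, ruling out $s_3 = s_4$ and precluding a perfect cuboid in this family.
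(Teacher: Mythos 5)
Your opening step coincides with the paper's: from (92)--(93), $s_3=s_4$ forces $s_2^2=\omega_+(\alpha)\omega_-(\alpha)=\cos(2\alpha)$ (the paper phrases this as $s_2^2s_3=s_4\cos(2\alpha)$). After that the routes diverge. The paper stops immediately: since $2\alpha$ is a (nontrivial) Heron angle, $\cos(2\alpha)$ cannot be the square of a rational number by Appendix~A, Lemma~1 --- a lemma that already encapsulates the Euler--Fermat fact that $b^4-a^4$ is never a nonzero square --- so the contradiction is one line and equation (91) is never needed. You instead bring in (91), eliminate $s_2$, obtain $\cos^2(2\alpha)+4u_1^2\cos(2\alpha)-1=0$, and conclude that $\sqrt{1+4u_1^4}$ must be rational, i.e.\ $q^4+4p^4=r^2$ with $u_1=p/q$ in lowest terms; you then kill this by descent through the Pythagorean parameterization and Fermat's theorem on $X^4-Y^4=Z^2$. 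This is correct, and it has the small virtue of not invoking the Heron-angle lemma (only classical Diophantine facts), but it is longer and ultimately rests on the same Fermat-type input as Lemma~1. Two points in your sketch deserve explicit care: (i) the triple $(q^2,2p^2,r)$ is primitive only when $q$ is odd; if $q$ is even you must first write $q=2k$, extract $r=2r'$, and pass to $p^4+4k^4=r'^2$ with $p$ odd before parameterizing --- ``assume it primitive'' glosses this swap; (ii) you should note that $N=b^2=0$ would force $p=0$, i.e.\ $u_1=0$, and that $q\geq 1$, so the solution of $q^2=a^4-b^4$ you produce is genuinely nontrivial (the degenerate solutions of $X^4-Y^4=Z^2$ have $YZ=0$). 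With those details supplied your argument is sound; the paper's version buys brevity by reusing a lemma it needs anyway, while yours shows the obstruction can be reached directly from the explicit parameterization (91)--(93).
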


\begin{proof}
From equation (92,93) we find 

\begin{align}
s_2^2s_3 = s_4\cos{(2{\alpha})}
\end{align}

Since 2${\alpha}$ is a Heron angle, $\cos{(2{\alpha})}$ can not be the square of a rational number, (Appendix A, Lemma 1) \\

\begin{remark}
For ${\psi}=0$, we get from page 4 that

\begin{align*}
{\omega}_+({\sigma}_1) &= \sqrt{2},& {\omega}_-({\sigma}_1) &= 0 \\
\end{align*} 

or

\begin{align*}
\cos{\sigma}_1 &= \frac{1}{\sqrt{2}},& \sin{\sigma}_1 &= \frac{1}{\sqrt{2}}
\end{align*} 

\begin{align*}
&\text{meaning that} \hspace{0.25cm} {\sigma}_1 = \frac{\pi}{4} \\
&\text{and thus} \hspace{0.25cm} {\alpha}+{\alpha}_1 = 2 {\sigma}_1 = \frac{\pi}{2}
\end{align*} 

\end{remark}

\end{proof}

\clearpage

\begin{example}
\begin{align*}
s_1=\frac{1}{2}, \hspace{0.25cm} m=\frac{1}{3} 
\end{align*} \\
\begin{align*}
\cos{{\alpha}}=\frac{4}{5}, \hspace{0.25cm} \sin{{\alpha}}=\frac{3}{5}, \hspace{0.25cm} \cos{(2{\alpha})}=\frac{7}{25}, \hspace{0.25cm} \sin{(2{\alpha})}=\frac{24}{25}, 
\hspace{0.25cm} {\omega}_+({\alpha})=\frac{7}{5}, \hspace{0.25cm} {\omega}_-({\alpha})=\frac{1}{5} \\
\end{align*} \\
Then 
\begin{align*}
s_2=\frac{7}{16}, \hspace{0.25cm} s_3=\frac{16}{35}, \hspace{0.25cm} s_4=\frac{5}{16} \\
\end{align*} \\
and \\
\begin{align*}
u_1&=\frac{840}{1120},& &u_2=\frac{1035}{1120},&&u_3=\frac{969}{1120},&&u_4=\frac{1617}{1120} \\
v_1&=\frac{1400}{1120},& &v_2=\frac{1525}{1120},&&v_3=\frac{1481}{1120},&&v_4=\frac{1967}{1120} 
\end{align*} \\

The leaning box is then given by \\

\begin{align*}
x&=1120,& &y=840,& z=1035 \\
a&=1400,& &b=1525& \\
c_1&=969,& &c_2=1617 \\
d_1&=1481,& &d_2=1967 \\
\end{align*} \\

\end{example}

\clearpage

\begin{example}

\begin{align*}
s_1=\frac{12}{25}, \hspace{0.25cm} m=\frac{1}{3}
\end{align*} \\

\begin{align*}
\cos{(2{\alpha})}=\frac{7}{25}, \hspace{0.25cm} \sin{(2{\alpha})}=\frac{24}{25}, \hspace{0.25cm} {\omega}_+({\alpha})=\frac{7}{5}, \hspace{0.25cm} {\omega}_-({\alpha})=\frac{1}{5} \\
\end{align*} \\

Then

\begin{align*}
s_2=\frac{3367}{7200}, \hspace{0.25cm} s_3=\frac{1440}{3367}, \hspace{0.25cm} s_4=\frac{2405}{7200}
\end{align*} \\

and

\begin{align*}
u_1&=\frac{38868648}{48484800},& &u_2=\frac{40503311}{48484800},&&u_3=\frac{46315445}{48484800},&&u_4=\frac{64478365}{48484800} \\
v_1&=\frac{62141352}{48484800},& &v_2=\frac{63176689}{48484800},&&v_3=\frac{67051445}{48484800},&&v_4=\frac{80673635}{48484800} 
\end{align*} \\

The leaning box is then given by \\

\begin{align*}
x&=48484800,& &y=38868648, &&z=40503311 \\
a&=62141352,& &b=63176689 \\
c_1&=46315445,& &c_2=64478365 \\
d_1&=67051445,& &d_2=80673635 \\
\end{align*} \\

\end{example}

\clearpage
\section{Symmetry and the general equations} 

We have solved the general equations (82-84) for \\

\begin{align*}
{\lambda} = \tan {\psi} = 0 \\
\end{align*}

For a given $s_1$, we found $s_2, s_3, s_4$ in terms of $u_1$ and ${\alpha}$ (91-93). \\
The two parallelograms I, II in (16-17) are described by the two Heron angles ${\alpha}$ and ${\alpha}_1$ and according to the observation in Appendix D are also described by the two Heron angles ${\beta}$ and ${\beta}_1$. Their generators are given by \\

\begin{align}
m({\alpha}) = \frac{2u_2 + u_3 - u_4}{2u_1 + u_3 +u_4},\hspace{0.25cm} m({\alpha}_1) = \frac{2v_2 + v_3 - v_4}{2u_1 + v_3 +v_4}
\end{align} \\

\begin{align}
m({\beta}) = \frac{2u_2 - u_3 + u_4}{2u_1 + u_3 +u_4},\hspace{0.25cm} m({\beta}_1) = \frac{2v_2 - v_3 + v_4}{2u_1 + v_3 +v_4}
\end{align} \\

Now the equation (16) is invariant under the interchange of $u_3$ and $u_4$, and the equation (17) is invariant under the interchange of $v_3$ and $v_4$. According to (13-14) this means that the equations (16-17) are invariant under the interchange of $s_3$ with $s_4$. \\

From (p.4) and (81) we find 

\begin{align*}
{\alpha} + {\alpha}_1 = 2 {\sigma}_1 
\end{align*} 

\begin{align*}
{\omega}_+({\sigma}_1) = \sqrt{2} \cos{\psi},\hspace{0.25cm} {\omega}_-({\sigma}_1) = \sqrt{2} \sin{\psi},\hspace{0.25cm} {\lambda} = \tan{{\psi}} = {\frac{{\omega}_-({\sigma}_1)}{{\omega}_+({\sigma}_1)}}
\end{align*} \\

let k be the generator of ${\alpha} + {\alpha}_1$ \\

\begin{align}
k = \frac{m({\alpha}) + m({\alpha}_1)}{1 - m({\alpha}) m({\alpha}_1)} 
\end{align} \\

Then
\begin{align}
\sin{(2{\sigma}_1)} = \frac{2k}{1 + k^2},\hspace{0.25cm} \cos{(2{\sigma}_1)} = \frac{1 - k^2}{1 + k^2}
\end{align}

and 

\begin{align}
{\lambda} = tan{{\psi}} = \frac{{\omega}_-({\sigma}_1)}{{\omega}_+({\sigma}_1)} = \frac{{\omega}_-^2({\sigma}_1)}{{\omega}_+({\sigma}_1){\omega}_-({\sigma}_1)} = \frac{1 - \sin{(2{\sigma}_1)}}{\cos{(2{\sigma}_1)}}
\end{align} \\

resulting in

\begin{align}
{\lambda} = \frac{1 - k}{1 + k}
\end{align} \\

Let $\bar{k}$ be the generator of ${\beta} + {\beta}_1$ \\

\begin{align}
\bar{k} = \frac{m({\beta}) + m({\beta}_1)}{1 - m({\beta}) m({\beta}_1)}
\end{align}

and thus 

\begin{align}
\bar{{\lambda}} = \frac{1 - \bar{k}}{1 + \bar{k}}
\end{align}  \\

Therefore the parameters $u_1, {\beta}, \bar{{\lambda}}$ also satisfy the general equations, however with the interchange of $s_3$ with $s_4$. \\

\clearpage
\section{An equivalent set of equations for the rational leaning box} 

For the three parallelograms we use

\begin{align*}
&\text{I. }  &&2u_1^2+2u_2^2= u_3^2+u_4^2 \\
&\text{II. } &&2u_1^2+2v_2^2= v_3^2+v_4^2 \\
&\text{III. }&&v_2^2-u_2^2 = 1, v_3^2-u_3^2 = 1, v_4^2-u_4^2 = 1
\end{align*}

where $u_k = \cot{{\psi}_k}$, $v_k=\frac{1}{\sin{{\psi}_k}}$, ${\psi}_k$ Heron angles with \\
generators $s_k \in (0,1)$, k=1,2,3,4. \\

We give $s_1$ resulting in giving $u_1$ and $v_1$. \\

According to Appendix D we have the following parameterization \\

\begin{align}
&\text{I.  parameters} & &u_1, {\alpha}, {\beta}; {\alpha}, {\beta} \in (0,\frac{\pi}{2}) \\
&                      & &{\alpha}+{\beta}=2{\sigma}, {\alpha}-{\beta}=2{\delta}; {\alpha}={\sigma}+{\delta}, {\beta}={\sigma}-{\delta} \\
&\text{II. parameters} & &u_1, {\alpha_1}, {\beta_1}; {\alpha_1}, {\beta_1} \in (0,\frac{\pi}{2}) \\
&                      & &{\alpha}_1+{\beta}_1=2{\sigma}_1, {\alpha}_1-{\beta}_1=2{\delta}_1; {\alpha}_1={\sigma}_1+{\delta}_1, {\beta}_1={\sigma}_1-{\delta}_1
\end{align}

and thus the representation

\begin{align}
&u_2 = u_1{\tan{\sigma}},   & &u_3=u_1{\frac{{\omega}_+({\delta})}{\cos{\sigma}}},     & &u_4=u_1{\frac{{\omega}_-({\delta})}{\cos{\sigma}}} \\
&v_2 = u_1{\tan{\sigma}_1}, & &v_3=u_1{\frac{{\omega}_+({\delta}_1)}{\cos{\sigma}_1}}, & &v_4=u_1{\frac{{\omega}_-({\delta}_1)}{\cos{\sigma}_1}}
\end{align}

conversely

\begin{align}
&\tan{{\sigma}}=\frac{u_2}{u_1},  & &\tan{{\delta}}=\frac{u_3-u_4}{u_3+u_4} \\
&\tan{{\sigma}_1}=\frac{v_2}{u_1},& &\tan{{\delta}_1}=\frac{v_3-v_4}{v_3+v_4}
\end{align}

Observe that ${\psi}_1$ is a Heron angle and ${\sigma}, {\delta}, {\sigma}_1, {\delta}_1$ are Euler angles. We rename ${\psi}_1={\psi}$ \\

We now have the conditions

\begin{align}
u_1^2[{\tan^2{\sigma}_1}-{\tan^2{\sigma}}] &= u_1^2[\frac{1}{{\cos^2{\sigma}_1}}-\frac{1}{{\cos^2{\sigma}}}]=1 \\
u_1^2[\frac{{{\omega}_+}^2({\delta}_1)}{{\cos^2{\sigma}_1}} - \frac{{{\omega}_+}^2({\delta})}{{\cos^2{\sigma}}}] &= u_1^2[\frac{1+\sin{(2{\delta}_1)}}{{\cos^2{\sigma}_1}} - \frac{1+\sin{(2{\delta})}}{{\cos^2{\sigma}}}]=1 \\
u_1^2[\frac{{{\omega}_-}^2({\delta}_1)}{{\cos^2{\sigma}_1}} - \frac{{{\omega}_-}^2({\delta})}{{\cos^2{\sigma}}}] &= u_1^2[\frac{1-\sin{(2{\delta}_1)}}{{\cos^2{\sigma}_1}} - \frac{1-\sin{(2{\delta})}}{{\cos^2{\sigma}}}]=1
\end{align}

or using eq.(111), the conditions reduce to 

\begin{align}
&{\tan^2{\sigma}_1} - {\tan^2{\sigma}} = {\tan^2{\psi}} \\
&\frac{\sin(2{\delta})}{{\cos^2{\sigma}}} = \frac{\sin(2{\delta}_1)}{{\cos^2{{\sigma}_1}}}
\end{align}

Using the generators, we introduce the abbreviations

\begin{align}
M &= m({\alpha}+{\beta}) = m(2{\sigma}) = \tan{\sigma} > 0 \\
M_1 &= m({\alpha}_1+{\beta}_1) = m(2{\sigma}_1) = \tan{\sigma}_1 > 0 \\
N &= m({\alpha}-{\beta}) = m(2{\delta}) = \tan{\delta} \\
N_1 &= m({\alpha}_1-{\beta}_1) = m(2{\delta}_1) = \tan{\delta}_1
\end{align}

Then

\begin{align}
\tan{\alpha} &= \tan{({\sigma}+{\delta})} = \frac{\tan{\sigma} + \tan{\delta}}{1-\tan{\sigma}\tan{\delta}} = \frac{M+N}{1-MN} \\
\tan{\alpha}_1 &= \tan{({\sigma}_1+{\delta}_1)} = \frac{\tan{\sigma}_1 + \tan{\delta}_1}{1-\tan{\sigma}_1\tan{\delta}_1} = \frac{M_1+N_1}{1-M_1N_1}
\end{align}

resulting in

\begin{align}
N &= \frac{\tan{\alpha} - M}{1+M \tan{\alpha}} \\
N_1 &= \frac{\tan{\alpha}_1 - M_1}{1+M_1 \tan{\alpha}_1}
\end{align}

The conditions (114) (115) then become

\begin{align}
M_1^2-M^2 &= \tan^2{{\psi}} \\
\frac{2N}{1+N^2}(1+M^2) &= \frac{2N_1}{1+N_1^2}(1+M_1^2)
\end{align}

Or, using eqs (122), (123) we find

\begin{align}
(1-M^2) \sin(2{\alpha})-2M\cos(2{\alpha}) = (1-M_1^2)\sin(2{\alpha}_1)-2M_1\cos(2{\alpha}_1)
\end{align}

Now

\begin{align}
\cos{\delta}\cos{\sigma} &= \frac{1}{2}[\cos({\delta}-{\sigma}) + \cos({\delta}+{\sigma})] = \frac{1}{2}[\cos{\beta} + \cos{\alpha}] \\
\sin{\delta}\cos{\sigma} &= \frac{1}{2}[\sin({\delta}-{\sigma}) + \sin({\delta}+{\sigma})] = \frac{1}{2}[\sin{\alpha} - \sin{\beta}] \\
\frac{1}{{\cos^2{\sigma}}} &= 1+{\tan^2{\sigma}} = 1+M^2
\end{align}
 
gives the following representation

\begin{align}
\frac{{\omega}_+({\delta})}{\cos{\sigma}} = \frac{\cos{\delta} + \sin{\delta}}{{\cos^2{\sigma}}} \cos{\sigma} = \frac{1}{2}(1+M^2)[{\omega}_+({\alpha}) + {\omega}_-({\beta})] \\
\frac{{\omega}_-({\delta})}{\cos{\sigma}} = \frac{\cos{\delta} - \sin{\delta}}{{\cos^2{\sigma}}} \cos{\sigma} = \frac{1}{2}(1+M^2)[{\omega}_-({\alpha}) + {\omega}_+({\beta})]
\end{align}

From

\begin{align}
\cos{\beta} &= \cos({\alpha} + {\beta} - {\alpha}) = \cos({\alpha}+{\beta}) \cos{\alpha} + \sin({\alpha} + {\beta}) \sin{\alpha} \\
\cos{\beta} &= \frac{1-M^2}{1+M^2} \cos{\alpha} + \frac{2M}{1+M^2} \sin{\alpha} \\
\sin{\beta} &= \sin({\alpha} + {\beta} - {\alpha}) = \sin({\alpha}+{\beta}) \cos{\alpha} - \cos({\alpha} + {\beta}) \sin{\alpha} \\
\sin{\beta} &= \frac{2M}{1+M^2} \cos{\alpha} - \frac{1-M^2}{1+M^2} \sin{\alpha} 
\end{align}

we find

\begin{align}
{\omega}_+({\beta}) = \frac{1-M^2}{1+M^2} {{\omega}_-({\alpha})} + \frac{2M}{1+M^2} {\omega}_+({\alpha}) \\
{\omega}_-({\beta}) = \frac{1-M^2}{1+M^2} {{\omega}_+({\alpha})} - \frac{2M}{1+M^2} {\omega}_-({\alpha})
\end{align}

and then

\begin{align}
{\omega}_+({\alpha}) + {\omega}_-({\beta}) = \frac{2}{1+M^2}[{\omega}_+({\alpha}) - M{\omega}_-({\alpha})] \\
{\omega}_-({\alpha}) + {\omega}_+({\beta}) = \frac{2}{1+M^2}[{\omega}_-({\alpha}) + M{\omega}_+({\alpha})]
\end{align}

Finally, from eq.(107)

\begin{align}
&u_2 = u_1M \\
&u_3 = u_1[{\omega}_+({\alpha}) - M{\omega}_-({\alpha})] \\
&u_4 = u_1[{\omega}_-({\alpha}) + M{\omega}_+({\alpha})]
\end{align}

and from eq.(108)

\begin{align}
&v_2 = u_1M_1 \\
&v_3 = u_1[{\omega}_+({\alpha}_1) - M_1{\omega}_-({\alpha}_1)] \\
&v_4 = u_1[{\omega}_-({\alpha}_1) + M_1{\omega}_+({\alpha}_1)]
\end{align}

From the symmetry of interchanging ${\alpha}$ and ${\beta}$, which corresponds to the interchange of $u_3$ and $u_4$, we also have the representation

\begin{align}
&u_3 = u_1[{\omega}_-({\beta}) + M{\omega}_+({\beta})] \\
&u_4 = u_1[{\omega}_+({\beta}) - M{\omega}_-({\beta})]
\end{align}
 
\subsubsection*{\underline{Example}}
For the special case of ${\alpha}+{\alpha}_1=\frac{{\pi}}{2}$, resulting in 

\begin{align*}
\cos(2{\alpha}_1) = -\cos(2{\alpha}), \sin(2{\alpha}_1) = \sin(2{\alpha})
\end{align*}

and given the two Heron angles ${\psi}, {\alpha}$ ; $u_1=\cot{\psi}$, \\
the conditions (124) (126) read

\begin{align*}
M_1-M = 2\cot(2{\alpha}), M_1+M = \frac{1}{2}{\tan^2{\psi}}\tan(2{\alpha})
\end{align*}

or

\begin{align*}
M=\frac{1}{4}[{\tan^2{\psi}}\tan(2{\alpha}) - 4\cot(2{\alpha})] \\
M_1=\frac{1}{4}[4\cot(2{\alpha}) + {\tan^2{\psi}}\tan(2{\alpha})]
\end{align*}

Observe that this two-parameter family of rational leaning boxes has no cuboid limit, because in the cuboid limit N=0, $N_1=0$, implying

\begin{align*}
M=\tan{\alpha}, M_1=\tan{\alpha}_1 = \frac{1}{\tan{\alpha}}
\end{align*}

Then

\begin{align*}
M_1^2 - M^2 = {\tan^2{\psi}}
\end{align*}

reads

\begin{align*}
1-{\tan^4{\alpha}} = \tan^2{\alpha}\tan^2{\psi}
\end{align*}

which has no rational solutions [6]. \\

We now split the eq.(126) into two parts

\begin{align}
&(M^2-1)\sin(2{\alpha}) + 2M\cos(2{\alpha}) = 4D \\
&(M_1^2-1)\sin(2{\alpha}_1) + 2M_1\cos(2{\alpha}_1) = 4D
\end{align}

According to the Appendix F, they have the following parameter representations, replacing $\lambda$ by $r$, respectively by $r_1$.

\begin{align}
M &= 2r + \tan{\alpha} &D=r[1+r\sin(2{\alpha})] \\
M_1 &= 2r_1 + \tan{\alpha}_1 &D=r_1[1+r_1\sin(2{\alpha}_1)]
\end{align}

Now, using [8], the equation

\begin{align}
r[1+r\sin(2{\alpha})] = r_1[1+r_1\sin(2{\alpha}_1)]
\end{align}

has the parameter representation

\begin{align}
r=f\frac{1+f\sin(2{\alpha}_1)}{1-f^2\sin(2{\alpha})\sin(2{\alpha}_1)} \\
r_1=f\frac{1+f\sin(2{\alpha})}{1-f^2\sin(2{\alpha})\sin(2{\alpha}_1)}
\end{align}

Conversely, for $f \neq 0$ \\

case (i)

\begin{align}
r_1 \neq r, f=\frac{r_1-r}{r\sin(2{\alpha})-r_1\sin(2{\alpha}_1)}
\end{align}

case (ii)

\begin{align}
r_1 = r, f=\frac{r}{1+r\sin(2{\alpha})}
\end{align}

\begin{align*}
&\text{and} &  &\sin(2{\alpha}_1) = \sin(2{\alpha}) \\
&\text{or}  &  &\cos({\alpha}_1+{\alpha})\sin({\alpha}_1-{\alpha}) = 0
\end{align*}
\begin{align*}
&\text{For the non-trivial case,} {\alpha} \neq {\alpha}_1 \text{we have} \\
&{\alpha}_1 + {\alpha} = \frac{{\pi}}{2}\text{, which is the example above}
\end{align*}

This parameter representation can be verified directly.

\section{Cuboid Limit} 

The cuboid limit is given by \\

\begin{align}
{\beta} \rightarrow {\alpha} \hspace{0.25cm} \text{and} \hspace{0.25cm} {\beta}_1 \rightarrow {\alpha}_1
\end{align}

From (118, 119) this means

\begin{align}
N \rightarrow 0 \hspace{0.25cm} \text{and} \hspace{0.25cm} N_1 \rightarrow 0
\end{align}

and from (122, 123) we get

\begin{align}
M \rightarrow \tan{\alpha} \hspace{0.25cm} \text{and} \hspace{0.25cm} M_1 \rightarrow \tan{\alpha}_1
\end{align}

or equivalently from (150,151) that  

\begin{align}
r \rightarrow 0 \hspace{0.25cm} \text{and} \hspace{0.25cm} r_1 \rightarrow 0
\end{align}

Looking at the equation (152) and letting $ \frac{r_1}{r} = s $, we have two cases

\begin{equation}
\begin{aligned}
\text{(i)} \hspace{0.25cm} s=1 \hspace{0.25cm} \text{implies} \hspace{0.25cm} &\sin{(2{\alpha})}=\sin{(2{\alpha}_1)} \hspace{0.25cm} \text{or} \hspace{0.25cm} \\
{\alpha}+{\alpha}_1 = &\frac{\pi}{2}
\end{aligned}
\end{equation}

which, according to the example on page 21 has no cuboid limit. This is also the family found in section 5.

\begin{equation}
\begin{aligned}
\text{(ii)} \hspace{0.25cm} s \neq 1 \hspace{0.25cm} \text{. Let}& \\
F(r,r_1) &= \frac{r_1}{r} - \frac{1+r\sin(2{\alpha})}{1+r_1\sin(2{\alpha_1})} 
\end{aligned}
\end{equation}

For the cuboid limit

\begin{align}
F(r,r_1) = s-1 \neq 0 \text{,}
\end{align}

which contradicts equation (152).\\

Thus there is also no cuboid limit. \\

In conclusion, there is no perfect cuboid.

\clearpage
\section{Consequence} 

\subsubsection*{\underline{Corollary}}
Let ${\alpha}_1, {\alpha}$ be Euler angles and ${\psi}$ be a Heron angle. Then in the equation

\begin{align}
{\tan^2{\alpha}_1} - {\tan^2{\alpha}} = {\tan^2{\psi}}
\end{align}

not both ${\alpha}$ and ${\alpha}_1$ can be Heron angles

\begin{proof}
In the cuboid limit only equation (124) is the surviving condition. If both ${\alpha}$ and ${\alpha}_1$ were Heron angles we would have a perfect cuboid. This is a contradiction.
\end{proof}

\subsubsection*{\underline{Example}}
Euler Cuboid (body diagonal not rational)

\begin{align*}
\tan{\alpha}_1 &= \frac{125}{240}, &\cos{\alpha}_1 &= \frac{48}{\sqrt{2929}}, &\sin{\alpha}_1 &= \frac{25}{\sqrt{2929}} && \\
\tan{\alpha} &= \frac{44}{240}, &\cos{\alpha} &= \frac{60}{61}, &\sin{\alpha} &= \frac{11}{61}, &m({\alpha}) &= \frac{1}{11} \\
\tan{\psi} &= \frac{117}{240}, &\cos{\psi} &= \frac{80}{89}, &\sin{\psi} &= \frac{39}{89}, &m({\psi}) &= \frac{3}{13}
\end{align*}

\subsubsection*{\underline{Example}}
Face Cuboid (one face diagonal not rational)

\begin{align*}
\tan{\alpha}_1 &= \frac{765}{520}, &\cos{\alpha}_1 &= \frac{104}{185}, &\sin{\alpha}_1 &= \frac{153}{185}, &m({\alpha}_1) &= \frac{9}{17} \\
\tan{\alpha} &= \frac{756}{520}, &\cos{\alpha} &= \frac{130}{\sqrt{52621}}, &\sin{\alpha} &= \frac{189}{\sqrt{52621}} &&\\
\tan{\psi} &= \frac{117}{520}, &\cos{\psi} &= \frac{40}{41}, &\sin{\psi} &= \frac{9}{41}, &m({\psi}) &= \frac{1}{9}
\end{align*}

\clearpage
\section*{Appendix A}
\subsection*{\centerline{Generator of an angle}\\ \centerline{Heron angle, Euler angle} \\}
\subsubsection*{\underline{Definition 1}}
For an arbitrary angle $\alpha$, its generator is defined by\\ 
\\

\setcounter{equation}{0}
\renewcommand{\theequation}{A.\arabic{equation}}

\begin{equation}
\begin{aligned}
m({\alpha})=\frac{\sin{\alpha}}{1+\cos{\alpha}}=\tan(\frac{\alpha}{2})\\
\end{aligned}
\end{equation}

Consequently

\begin{equation}
\begin{aligned}
\cos {\alpha}  = \frac{1-m^2({\alpha})}{1+m^2({\alpha})}, \hspace{0.25cm} \sin {\alpha} = \frac{2m(\alpha)}{1+m^2(\alpha)}\\
\end{aligned}
\end{equation}
\\

We have the following properties

\begin{align}
m(-{\alpha})  &= -m(\alpha)\\
m^2({\alpha})  &= \frac{1-\cos{\alpha}}{1+\cos{\alpha}}\\
m({\alpha}+{\beta})  &= \frac{m({\alpha})+m({\beta})}{1-m({\alpha})m({\beta})}\\
\frac{dm({\alpha})}{d{\alpha}} &= \frac{1}{2} \frac{1}{\cos^2(\frac{\alpha}{2})} > 0
\end{align}

Then m($\alpha$) is an increasing function of $\alpha$, with 

\begin{equation*}
\begin{aligned}
m(0) = 0, \hspace{0.25cm} m(\frac{{\pi}}{2}) = 1\\ 
\end{aligned}
\end{equation*}\\

For $0<{\alpha}<\frac{{\pi}}{2}$ the generator satisfies

\begin{equation}
\begin{aligned}
0<m({\alpha})<1\\
\end{aligned}
\end{equation}\\

\subsubsection*{\underline{Definition 2}}
An angle ${\alpha}$ is called a Heron angle if both $\sin{\alpha}$ and $\cos{\alpha}$ are rational.\\

The generator of a Heron angle is rational and vise versa. The sum and the difference of two Heron angles are Heron angles.\\

The complement 

\begin{equation}
\begin{aligned}
\bar{{\alpha}} = \frac{{\pi}}{2} - {\alpha} \\
\end{aligned}
\end{equation}\\

of a Heron angle ${\alpha}$ is a Heron angle. \\

\subsubsection*{\underline{Definition 3}}
An angle ${\alpha}$ is called an Euler angle if $\tan{\alpha}$ is rational.\\

If ${\alpha}$ is an Euler angle then $2{\alpha}$ is a Heron angle.
A Heron angle is an Euler angle.\\

\subsubsection*{\underline{Lemma 1}}
Let ${\alpha}$ be a Heron angle, $0\leq{\alpha}\leq{\pi}$.\\
Then the equation 

\begin{equation}
\begin{aligned}
\sin{{\alpha}} = {\lambda}^2 \\
\end{aligned}
\end{equation}\\

where ${\lambda}$ is a rational number, $0\leq{\lambda}\leq1$, has only the trivial \\
solutions ${\alpha}=0, \hspace{0.25cm} {\alpha}=\frac{{\pi}}{2}, \hspace{0.25cm} {\alpha}={\pi}$\\ 

\begin{proof}
let ${\lambda}=\frac{a}{b}, \hspace{0.25cm} 0<a<b, \hspace{0.25cm}$ a and b integers.\\

Then $\sin{\alpha}$ is rational and
 
\begin{equation}
\begin{aligned}
\cos^2{{\alpha}} = 1-\sin^2{{\alpha}}=1-{\lambda}^4=\frac{b^4-a^4}{b^4} \\
\end{aligned}
\end{equation}\\

But according to Euler \cite{nagell}, $b^4-a^4$ can not be the square of an integer. Thus $\cos {\alpha}$ is not rational, i.e. ${\alpha}$ is not a Heron angle, except for the trivial cases. 
\end{proof}

\subsubsection*{\underline{Lemma 2}}
Let ${\alpha}$ be a Heron angle, $0\leq{\alpha}\leq\frac{{\pi}}{2}$.\\

Then the equation

\begin{equation}
\begin{aligned}
\tan{{\alpha}} = {\lambda}^2 \\
\end{aligned}
\end{equation}\\

where ${\lambda}$ is a rational number, ${\lambda}\geq0$, has only the trivial solution ${\alpha}=0$.\\

\begin{proof}
Let ${\lambda}=\frac{a}{b},\hspace{0.25cm} a>0, \hspace{0.25cm} b>0, \hspace{0.25cm}$ a and b integers.\\

Then $\tan{{\alpha}}$ is rational and\\ 

\begin{equation}
\begin{aligned}
\frac{1}{\cos^2{\alpha}}=1+\tan^2{{\alpha}}=1+{\lambda}^4=\frac{b^4+a^4}{b^4}\\
\end{aligned}
\end{equation}\\

But according to Euler \cite{nagell}, $b^4+a^4$ is not the square of an integer. Thus $\cos{\alpha}$ is not rational, except for the trivial case.\\ 

Thus in any case, for a Heron angle ${\alpha}, \sin{{\alpha}}, \cos{{\alpha}}, \tan{{\alpha}}, \cot{{\alpha}}$ can not be the square of a rational number, except for the trivial cases.\\
\end{proof}

\subsubsection*{\underline{Corollary 1}}
The elliptic curve\\

\begin{equation}
\begin{aligned}
y^2=x(1-x^2)
\end{aligned}
\end{equation}\\

has only the trivial rational points (x,y), namely (-1,0),(0,0),(1,0) \\

\begin{proof}
Let ${\alpha}$ be a Heron angle and\\
\begin{equation*}
\begin{aligned}
{\lambda}^2=\sin{2{\alpha}}\\
\end{aligned}
\end{equation*}\\

Then ${\lambda}$ can not be rational, except for the trivial cases.\\
Let x be the generator of ${\alpha}$\\

\begin{align*}
{\lambda}^2&=2\sin{{\alpha}}\cos{{\alpha}}=2\frac{2x}{1+x^2}\frac{1-x^2}{1+x^2}\\
{\lambda}^2&=[\frac{2}{1+x^2}]^2x(1-x^2)\\
\end{align*} \\

Then\\

\begin{equation}
\begin{aligned}
y^2=x(1-x^2)\\
\end{aligned}
\end{equation}\\

has only the trivial rational points.\\
\end{proof}

\subsubsection*{\underline{Corollary 2}}
The elliptic curve\\

\begin{equation}
\begin{aligned}
y^2=2x(1-x^2)\\
\end{aligned}
\end{equation}\\

has only the trivial rational points (x,y), namely (-1,0),(0,0),(1,0) \\

\begin{proof}
Let ${\alpha}$ be a Heron angle with generator x, and\\ 

\begin{equation*}
\begin{aligned}
{\lambda}^2=\tan{{\alpha}}\\
\end{aligned}
\end{equation*}\\

Then ${\lambda}$ can not be rational, except for the trivial case.\\

Now 

\begin{equation*}
\begin{aligned}
{\lambda}^2=\frac{2x}{1-x^2}=[\frac{1}{1-x^2}]^22x(1-x^2)\\
\end{aligned}
\end{equation*}

Then\\

\begin{equation*}
\begin{aligned}
y^2=2x(1-x^2)\\
\end{aligned}
\end{equation*}\\

has only the trivial rational points.
\end{proof}

\clearpage
\section*{Appendix B} 
\subsection*{\centerline{Rotations}}

\setcounter{equation}{0}
\renewcommand{\theequation}{B.\arabic{equation}}

A rotation in two dimensions is given by the matrix

\begin{equation}
\begin{aligned}
R({\alpha})=
\left( \begin{array}{cc}
\cos{\alpha} & -\sin{\alpha} \\
\sin{\alpha} & \cos{\alpha}
\end{array} \right)
\end{aligned}
\end{equation}\\

These form an Abelian group.

\begin{align}
&\text{Group multiplication}	\hspace{2cm} 	&&R({\alpha})R({\beta})=R({\alpha}+{\beta})\\
&\text{Identity}		&&R(0)\\
&\text{Inverse}			&&R({\alpha})^{-1}=R(-{\alpha})
\end{align}\\

For two two-dimensional vectors, related by a rotation, we have 

\begin{align}
\binom{x_1}{y_1} = R({\alpha}) \binom{x_2}{y_2}
\end{align}\\

They have the same length, i.e.

\begin{align}
x_1^2+y_1^2 = x_2^2+y_2^2
\end{align}\\

Conversely we get the rotation angle ${\alpha}$ through

\begin{align}
\cos{\alpha}=\frac{1}{x_2^2+y_2^2}[x_1x_2+y_1y_2]\\
\sin{\alpha}=\frac{1}{x_2^2+y_2^2}[y_1x_2-x_1y_2]
\end{align}

\clearpage
\section*{Appendix C}
\subsection*{\centerline{The ${\omega}$-functions}}

\setcounter{equation}{0}
\renewcommand{\theequation}{C.\arabic{equation}}

\subsubsection*{\underline{Definition}}
For an angle ${\alpha}$ we introduce the ${\omega}$-functions by

\begin{align}
{\omega}_+({\alpha})=\cos{\alpha}+\sin{\alpha}, \hspace{0.25cm} {\omega}_-({\alpha})=\cos{\alpha}-\sin{\alpha}
\end{align}

We then have the following properties

\begin{align}
{\omega}_+({-\alpha})&={\omega}_-({\alpha})\\
{\omega}_-({-\alpha})&={\omega}_+({\alpha})\\
{\omega}_+^2({\alpha})+{\omega}_-^2({\alpha})&=2\\
{\omega}_+({\alpha}){\omega}_-({\alpha})&=\cos(2{\alpha})\\
{\omega}_+^2({\alpha})&=1+\sin(2{\alpha})\\
{\omega}_-^2({\alpha})&=1-\sin(2{\alpha})
\end{align}

For two angles ${\alpha}$ and ${\beta}$ we introduce 

\begin{align}
{\alpha}+{\beta}=2{\sigma},& \hspace{0.25cm} {\alpha}-{\beta}=2{\delta}\\
{\alpha}={\sigma}+{\delta},& \hspace{0.25cm} {\beta}={\sigma}-{\delta}
\end{align}

We then find 

\begin{align}
{\omega}_+({\alpha}){\omega}_+({\beta})=\cos(2{\delta})+\sin(2{\sigma})\\
{\omega}_-({\alpha}){\omega}_-({\beta})=\cos(2{\delta})-\sin(2{\sigma})\\
{\omega}_+({\alpha}){\omega}_-({\beta})=\cos(2{\sigma})+\sin(2{\delta})
\end{align}

We also have the relation

\begin{align}
R({\beta}) \binom{{\omega}_+({\alpha}+{\beta})}{{\omega}_-({\alpha}+{\beta}}=\binom{{\omega}_+({\alpha})}{{\omega}_-({\alpha})}
\end{align}

or explicitly

\begin{align}
{\omega}_+({\alpha}+{\beta})&=\cos{\beta} {\omega}_+({\alpha}) + \sin{\beta} {\omega}_-({\alpha})\\
{\omega}_+({\alpha}+{\beta})&=\cos{\alpha} {\omega}_+({\beta}) + \sin{\alpha} {\omega}_-({\beta})\\
{\omega}_-({\alpha}+{\beta})&=-\sin{\beta} {\omega}_+({\alpha}) + \cos{\beta} {\omega}_-({\alpha})\\
{\omega}_-({\alpha}+{\beta})&=-\sin{\alpha} {\omega}_+({\beta}) + \cos{\alpha} {\omega}_-({\beta})
\end{align}

This gives the following relations

\begin{align}
{\omega}_+({\alpha})+{\omega}_+({\beta}) &= 2\cos{\delta} {\omega}_+({\sigma})\\
{\omega}_+({\alpha})-{\omega}_+({\beta}) &= 2\sin{\delta} {\omega}_-({\sigma})\\
{\omega}_-({\alpha})+{\omega}_-({\beta}) &= 2\cos{\delta} {\omega}_-({\sigma})\\
{\omega}_-({\alpha})-{\omega}_-({\beta}) &= -2\sin{\delta} {\omega}_+({\sigma})\\
{\omega}_+({\alpha})+{\omega}_-({\beta}) &= 2\cos{\sigma} {\omega}_+({\delta})\\
{\omega}_+({\alpha})-{\omega}_-({\beta}) &= 2\sin{\sigma} {\omega}_-({\delta})\\
{\omega}_-({\alpha})+{\omega}_+({\beta}) &= 2\cos{\sigma} {\omega}_-({\delta})\\
{\omega}_-({\alpha})-{\omega}_+({\beta}) &= -2\sin{\sigma} {\omega}_+({\delta})
\end{align}

and finally \\

\begin{align}
\left(
    \begin{array}{c}
      {\omega}_-({\alpha}) \\
      {\omega}_+({\alpha})
    \end{array}
\right) = R(2{\alpha})
\left(
    \begin{array}{cc}
      {\omega}_+({\alpha}) \\
      {\omega}_-({\alpha}) 
    \end{array}
\right)
\end{align}

\clearpage
\section*{Appendix D}
\subsection*{\centerline{The Rational Parallelogram}}

\setcounter{equation}{0}
\renewcommand{\theequation}{D.\arabic{equation}}

A parallelogram with its sides $u_1, u_2$ and diagonals $u_3, u_4$ being positive rational numbers is called a rational parallelogram It is governed by the parallelogram equation

\begin{align}
2u_1^2+2u_2^2=u_3^2+u_4^2
\end{align} \\

In \cite{wyss12, wyss14} we found a bijective parameter representation for all rational parallelograms.\\
It is given by the rational scaling parameter\\

\begin{align}
u>0
\end{align} \\

and two rational parameters m,n \\

\begin{align}
0<m<1, \hspace{0.25cm} 0<n<1
\end{align} \\

The representation is given by \\

\begin{align}
u_1&=(1-mn)u \\
u_2&=(m+n)u \\
u_3&=(1+mn-n+m)u \\
u_4&=(1+mn+n-m)u
\end{align}\\

Conversely 

\begin{align}
4u=2u_1&+u_3+u_4 \\
m=\frac{2u_2+u_3-u_4}{4u}&, \hspace{0.25cm} n=\frac{2u_2-u_3+u_4}{4u}
\end{align} \\

Special cases: 

\begin{align}
\text{Rectangle:} \hspace{1cm} u_4&=u_3; \hspace{0.25cm} n=m \\
\text{Rhomboid:}  \hspace{1cm} u_2&=u_1; \hspace{0.25cm} n=\frac{1-m}{1+m}
\end{align} \\

From (D.4) and (D.5) we find \\

\begin{align}
(m+n)u_1 = (1-mn)u_2
\end{align}

and then \\

\begin{align}
n&=\frac{u_2-mu_1}{u_1+mu_2}, \hspace{0.25cm} 0<n<1 \\
m&=\frac{u_2-nu_1}{u_1+nu_2}, \hspace{0.25cm} 0<m<1
\end{align} \\

From (D.13) we find \\

\begin{align}
1-mn&=\frac{u_1(1+m^2)}{u_1+mu_2} \\
1+mn&=\frac{u_1(1-m^2)+2mu_2}{u_1+mu_2} \\
m-n&=\frac{2mu_1-u_2(1-m^2)}{u_1+mu_2} \\
u&=\frac{u_1+mu_2}{1+m^2}
\end{align}

and then \\

\begin{align}
u_3&=(u_2+u_1)\frac{2m}{1+m^2}-(u_2-u_1)\frac{1-m^2}{1+m^2} \\
u_4&=(u_2+u_1)\frac{1-m^2}{1+m^2}+(u_2-u_1)\frac{2m}{1+m^2}
\end{align} \\

This is a parameterization of a rational parallelogram by \\

\begin{align*}
u_1>0,u_2>0, \text{ and } 0<m<1
\end{align*} \\

From (D.14) we find \\

\begin{align}
1-mn&=\frac{u_1(1+n^2)}{u_1+nu_2} \\
1+mn&=\frac{u_1(1-n^2)+2nu_2}{u_1+nu_2} \\
m-n&=\frac{-2nu_1+u_2(1-n^2)}{u_1+nu_2} \\
u&=\frac{u_1+nu_2}{1+n^2}
\end{align}\\

and then \\

\begin{align}
u_3&=(u_2-u_1)\frac{2n}{1+n^2}+(u_2+u_1)\frac{1-n^2}{1+n^2} \\
u_4&=(u_2+u_1)\frac{2n}{1+n^2}-(u_2-u_1)\frac{1-n^2}{1+n^2}
\end{align}\\

This is a parameterization of a rational parallelogram by \\

\begin{align*}
u_1>0, u_2>0, \text{ and } 0<n<1
\end{align*}\\

Now the two parameters m,n in (D.3) give rise to the Heron angles ${\alpha}$, ${\beta}$ through \\

\begin{align}
\cos{\alpha}&=\frac{1-m^2}{1+m^2}, \hspace{0.25cm} \sin{\alpha}=\frac{2m}{1+m^2}, \hspace{0.25cm} 0<{\alpha}<\frac{\pi}{2} \\
\cos{\beta}&=\frac{1-n^2}{1+n^2}, \hspace{0.25cm} \sin{\beta}=\frac{2n}{1+n^2}, \hspace{0.25cm} 0<{\beta}<\frac{\pi}{2}
\end{align}\\

And conversely \\

\begin{align}
m&=\frac{\sin{\alpha}}{1+\cos{\alpha}}=\tan(\frac{{\alpha}}{2}) \\
n&=\frac{\sin{\beta}}{1+\cos{\beta}}=\tan(\frac{{\beta}}{2})
\end{align}\\

Using the ${\omega}$-functions we get in matrix notation the representation \\

\begin{align}
\left(
    \begin{array}{c}
      u_3 \\
      u_4 
    \end{array}
\right) = 
\left(
    \begin{array}{cc}
      {\omega}_+({\alpha}) & -{\omega}_-({\alpha}) \\
      {\omega}_-({\alpha}) & {\omega}_+({\alpha})
    \end{array}
\right)
\left(
    \begin{array}{c}
      u_1 \\
      u_2 
    \end{array}
\right) 
\end{align}

Inversely \\

\begin{align}
2\left(
    \begin{array}{c}
      u_1 \\
      u_2 
    \end{array}
\right) = 
\left(
    \begin{array}{cc}
      {\omega}_+({\alpha}) & {\omega}_-({\alpha}) \\
      -{\omega}_-({\alpha}) & {\omega}_+({\alpha})
    \end{array}
\right)
\left(
    \begin{array}{c}
      u_3 \\
      u_4 
    \end{array}
\right) 
\end{align}

and conversely \\

\begin{align}
{\omega}_+({\alpha})=\frac{1}{u_1^2+u_2^2}[u_1u_3+u_2u_4] \\
{\omega}_-({\alpha})=\frac{1}{u_1^2+u_2^2}[u_1u_4-u_2u_3]
\end{align} \\

Similarly we find the representation \\

\begin{align}
\left(
    \begin{array}{c}
      u_3 \\
      u_4 
    \end{array}
\right) = 
\left(
    \begin{array}{cc}
      {\omega}_-({\beta}) & {\omega}_+({\beta}) \\
      {\omega}_+({\beta}) & -{\omega}_-({\beta})
    \end{array}
\right)
\left(
    \begin{array}{c}
      u_1 \\
      u_2 
    \end{array}
\right) 
\end{align}

Inversely \\

\begin{align}
2\left(
    \begin{array}{c}
      u_1 \\
      u_2 
    \end{array}
\right) = 
\left(
    \begin{array}{cc}
      {\omega}_-({\beta}) & {\omega}_+({\beta}) \\
      {\omega}_+({\beta}) & -{\omega}_-({\beta})
    \end{array}
\right)
\left(
    \begin{array}{c}
      u_3 \\
      u_4 
    \end{array}
\right) 
\end{align} \\

and conversely \\
 
\begin{align}
{\omega}_+({\beta})=\frac{1}{u_1^2+u_2^2}[u_1u_4+u_2u_3] \\
{\omega}_-({\beta})=\frac{1}{u_1^2+u_2^2}[u_1u_3-u_2u_4] 
\end{align} \\

Finally we introduce the two Euler angles ${\sigma},{\delta}$ through \\

\begin{align}
{\alpha}+{\beta}=2{\sigma},& \hspace{0.25cm} {\alpha}-{\beta}=2{\delta} \\
{\alpha}={\sigma}+{\delta},& \hspace{0.25cm} {\beta}={\sigma}-{\delta}
\end{align}

From \\

\begin{align}
u_1{\omega}_+({\alpha})-u_2{\omega}_-({\alpha})=u_1{\omega}_-({\beta})+u_2{\omega}_+({\beta})
\end{align}

and 

\begin{align}
2u_3&=u_1{\omega}_+({\alpha})-u_2{\omega}_-({\alpha})+u_1{\omega}_-({\beta})+u_2{\omega}_+({\beta}) \\
2u_4&=u_1{\omega}_-({\alpha})+u_2{\omega}_+({\alpha})+u_1{\omega}_+({\beta})-u_2{\omega}_-({\beta})
\end{align}\\

we find, using (C.19), (C.22), (C.24), (C.25) the relations

\begin{align}
&u_1\sin{\sigma}=u_2\cos{\sigma} \\
&u_3=[u_1\cos{\sigma}+u_2\sin{\sigma}]{\omega}_+({\delta}) \\
&u_4=[u_1\cos{\sigma}+u_2\sin{\sigma}]{\omega}_-({\delta})
\end{align}\\

resulting in the representation \\

\begin{align}
u_2&=u_1\tan{\sigma} \\
u_3&=u_1\frac{{\omega}_+({\delta})}{\cos{\sigma}} \\
u_4&=u_1\frac{{\omega}_-({\delta})}{\cos{\sigma}}
\end{align}

Conversely \\

\begin{align}
\tan{\sigma}&=\frac{u_2}{u_1} \\
\tan{\delta}&=\frac{u_3-u_4}{u_3+u_4}
\end{align}\\

From (D.27), (D.28), (D.39), (D.40) we find the inequalities \\

\begin{align}
\tan{\sigma}>0 \\
-1<\tan{\delta}<1
\end{align}\\

(D.47-49) is a parameterization of a rational parallelogram, given one side $u_1>0$, by two Euler angles. \\

Observe, that given $u_1$ and $u_2$ there are two representations of $u_3$ and $u_4$, (D.31) (D.35). One is through the Heron angle ${\alpha}$, with generator $m = m({\alpha})$ and the other through the Heron angle ${\beta}$, with generator $n = m({\beta})$. According to (D.8) and (D.9) they are however related through the interchange of $u_3$ with $u_4$.

\clearpage
\section*{Appendix E}
\subsection*{\centerline{Auxiliary Functions}}

\setcounter{equation}{0}
\renewcommand{\theequation}{E.\arabic{equation}}

For an angle ${\alpha}$ and a number Q we introduce the functions

\begin{align}
H({\alpha}) &= {\omega}_-({\alpha})-Q{\omega}_+({\alpha}) \\
K({\alpha}) &= {\omega}_-({\alpha})+Q{\omega}_+({\alpha}) \\
M({\alpha}) &= {\omega}_+({\alpha})-Q{\omega}_-({\alpha}) \\
N({\alpha}) &= {\omega}_+({\alpha})+Q{\omega}_-({\alpha})
\end{align}

or in matrix form \\

\begin{align}
\left(
    \begin{array}{c}
      H({\alpha}) \\
      N({\alpha}) 
    \end{array}
\right) = 
\left(
    \begin{array}{cc}
      {\omega}_-({\alpha}) & -{\omega}_+({\alpha}) \\
      {\omega}_+({\alpha}) & {\omega}_-({\alpha})
    \end{array}
\right)
\left(
    \begin{array}{c}
      1 \\
      Q 
    \end{array}
\right) =
R({\alpha})
\left(
    \begin{array}{c}
      1-Q \\
      1+Q 
    \end{array}
\right)
\end{align}

\begin{align}
\left(
    \begin{array}{c}
      K({\alpha}) \\
      M({\alpha}) 
    \end{array}
\right) = 
\left(
    \begin{array}{cc}
      {\omega}_-({\alpha}) & {\omega}_+({\alpha}) \\
      {\omega}_+({\alpha}) & -{\omega}_-({\alpha})
    \end{array}
\right)
\left(
    \begin{array}{c}
      1 \\
      Q 
    \end{array}
\right) =
R({\alpha})
\left(
    \begin{array}{c}
      1+Q \\
      1-Q 
    \end{array}
\right)
\end{align} \\

We now introduce the T-matrix \\

\begin{align}
T=
\left(
    \begin{array}{cc}
      0 & 1 \\
      1 & 0 
    \end{array}
\right)
\end{align} \\

and find the following properties \\

\begin{align}
T^2=
\left(
    \begin{array}{cc}
      1 & 0 \\
      0 & 1 
    \end{array}
\right)
\end{align}

\begin{align}
TR({\alpha})T=R(-{\alpha}) 
\end{align}

\begin{align}
T\binom{x}{y}=\binom{y}{x}
\end{align}

\begin{align}
R({\alpha})TR(-{\alpha})=
\left(
    \begin{array}{cc}
      -\sin(2{\alpha}) & \cos(2{\alpha}) \\
      \cos(2{\alpha}) & \sin(2{\alpha}) 
    \end{array}
\right)
\end{align} \\

Then we have the following statements \\

\subsubsection*{\underline{Lemma 1}}

\begin{align}
2&=H({\alpha}){\omega}_-({\alpha})+N({\alpha}){\omega}_+({\alpha}) \\
2&=K({\alpha}){\omega}_-({\alpha})+M({\alpha}){\omega}_+({\alpha}) \\
2Q&=N({\alpha}){\omega}_-({\alpha})-H({\alpha}){\omega}_+({\alpha}) \\
2Q&=K({\alpha}){\omega}_+({\alpha})-M({\alpha}){\omega}_-({\alpha})
\end{align}

\subsubsection*{\underline{Lemma 2}}

\begin{align}
\left(
    \begin{array}{c}
      H({\alpha}+{\beta}) \\
      N({\alpha}+{\beta})
    \end{array}
\right) = R({\alpha})
\left(
    \begin{array}{c}
      H({\beta}) \\
      N({\beta})
    \end{array}
\right) = R({\beta})
\left(
    \begin{array}{c}
      H({\alpha}) \\
      N({\alpha})
    \end{array}
\right)
\end{align}

\begin{align}
\left(
    \begin{array}{c}
      K({\alpha}+{\beta}) \\
      M({\alpha}+{\beta})
    \end{array}
\right) = R({\alpha})
\left(
    \begin{array}{c}
      K({\beta}) \\
      M({\beta})
    \end{array}
\right) = R({\beta})
\left(
    \begin{array}{c}
      K({\alpha}) \\
      M({\alpha})
    \end{array}
\right)
\end{align}

\subsubsection*{\underline{Lemma 3}}

With ${\bar{\alpha}}=\frac{\pi}{2}-{\alpha}$ we have \\

\begin{align}
H({\bar{\alpha}})=-K({\alpha}), \hspace{0.25cm} N({\bar{\alpha}}) = M({\alpha})
\end{align}

\subsubsection*{\underline{Lemma 4}}

\begin{align}
\left(
    \begin{array}{c}
      H({\alpha}) \\
      N({\alpha})
    \end{array}
\right) = R(2{\alpha})
\left(
    \begin{array}{c}
      M({\alpha}) \\
      K({\alpha})
    \end{array}
\right) \\
\left(
    \begin{array}{c}
      K({\alpha}) \\
      M({\alpha})
    \end{array}
\right) = R(2{\alpha})
\left(
    \begin{array}{c}
      N({\alpha}) \\
      H({\alpha})
    \end{array}
\right) 
\end{align}

\subsubsection*{\underline{Lemma 5}}

\begin{align}
1) \hspace{0.5cm} &H^2({\alpha})+N^2({\alpha}) = K^2({\alpha})+M^2({\alpha}) = 2(1+Q^2) \\
2) \hspace{0.5cm} &K({\alpha})N({\alpha})-H({\alpha})M({\alpha}) = 4Q \\
3) \hspace{0.5cm} &K({\alpha})N({\alpha})+H({\alpha})M({\alpha}) = 2(1+Q^2)\cos(2{\alpha}) \\
4) \hspace{0.5cm} &K({\alpha}+{\delta})H({\alpha}+{\beta})-K({\alpha}+{\beta})H({\alpha}+{\delta}) = 4Q\sin({\delta}-{\beta}) \\
5) \hspace{0.5cm} &N({\alpha}+{\beta})M({\alpha}+{\delta})-N({\alpha}+{\delta})M({\alpha}+{\beta}) = 4Q\sin({\delta}-{\beta})
\end{align}

\subsubsection*{\underline{Lemma 6}}

\begin{align}
1) \hspace{0.5cm} &K({\alpha})+H({\alpha}) = 2{\omega}_-({\alpha}) \\
2) \hspace{0.5cm} &K({\alpha})-H({\alpha}) = 2Q{\omega}_+({\alpha}) \\
3) \hspace{0.5cm} &N({\alpha})+M({\alpha}) = 2{\omega}_+({\alpha}) \\
4) \hspace{0.5cm} &N({\alpha})-M({\alpha}) = 2Q{\omega}_-({\alpha}) \\
5) \hspace{0.5cm} &M({\alpha})+H({\alpha}) = 2\cos{\alpha}[1-Q] \\
6) \hspace{0.5cm} &M({\alpha})-H({\alpha}) = 2\sin{\alpha}[1+Q] \\
7) \hspace{0.5cm} &N({\alpha})+K({\alpha}) = 2\cos{\alpha}[1+Q] \\
8) \hspace{0.5cm} &N({\alpha})-K({\alpha}) = 2\sin{\alpha}[1-Q]
\end{align} \\

Now with

\begin{align*}
{\alpha}+{\beta}=2{\sigma},& \hspace{0.25cm} {\alpha}-{\beta}=2{\delta} \\
{\alpha}={\sigma}+{\delta},& \hspace{0.25cm} {\beta}={\sigma}-{\delta}
\end{align*} \\

we introduce the angle ${\psi}$ through

\begin{align}
{\omega}_+({\sigma})=\sqrt{2}\cos{\psi}, \hspace{0.25cm} {\omega}_-({\sigma})=\sqrt{2}\sin{\psi}
\end{align} \\

Then ${\psi}$ is an Euler angle and for 

\begin{align*}
0<{\alpha}<\frac{{\pi}}{2}, \hspace{0.25cm} 0<{\beta}<\frac{{\pi}}{2}, \hspace{0.25cm} \text{ we find } -\frac{{\pi}}{4}<{\psi}<\frac{{\pi}}{4}
\end{align*} \\

From ${\sigma}={\alpha}-{\delta}$ and (C.13) we find \\

\begin{align}
\left(
    \begin{array}{c}
      {\omega}_+({\sigma}) \\
      {\omega}_-({\sigma})
    \end{array}
\right) = R({\delta})
\left(
    \begin{array}{c}
      {\omega}_+({\alpha}) \\
      {\omega}_-({\alpha})
    \end{array}
\right) = R({\psi})
\left(
    \begin{array}{c}
      \sqrt{2} \\
      0
    \end{array}
\right)
\end{align}

\begin{align}
\left(
    \begin{array}{c}
      {\omega}_+({\alpha}+{\psi}) \\
      {\omega}_-({\alpha}+{\psi})
    \end{array}
\right) = R(-{\psi})
\left(
    \begin{array}{c}
      {\omega}_+({\alpha}) \\
      {\omega}_-({\alpha})
    \end{array}
\right) = R(-{\delta})
\left(
    \begin{array}{c}
      \sqrt{2} \\
      0
    \end{array}
\right)
\end{align} \\

Explicitly \\

\begin{align}
\sqrt{2}\cos{\delta}={\omega}_+({\alpha}+{\psi}), \hspace{0.25cm} \sqrt{2}\sin{\delta} = -{\omega}_-({\alpha}+{\psi})
\end{align} \\

\subsubsection*{\underline{Lemma 7}}

\begin{align}
1) \hspace{0.5cm} M({\alpha})-N({\beta}) &= -2\sin{\psi}K({\alpha}+{\psi}) \\
2) \hspace{0.5cm} M({\alpha})+N({\beta}) &= 2\cos{\psi}M({\alpha}+{\psi}) \\
3) \hspace{0.5cm} N({\alpha})-M({\beta}) &= -2\sin{\psi}H({\alpha}+{\psi}) \\
4) \hspace{0.5cm} N({\alpha})+M({\beta}) &= 2\cos{\psi}N({\alpha}+{\psi}) \\
5) \hspace{0.5cm} K({\alpha})+H({\beta}) &= 2\sin{\psi}M({\alpha}+{\psi}) \\
6) \hspace{0.5cm} K({\alpha})-H({\beta}) &= 2\cos{\psi}K({\alpha}+{\psi}) \\
7) \hspace{0.5cm} H({\alpha})+K({\beta}) &= 2\sin{\psi}N({\alpha}+{\psi}) \\
8) \hspace{0.5cm} H({\alpha})-K({\beta}) &= 2\cos{\psi}H({\alpha}+{\psi})
\end{align}

The easy proofs are left to the reader. \\

\clearpage
\section*{Appendix F}
\subsection*{\centerline{A Convenient Bijective Parameterization} \\}

\setcounter{equation}{0}
\renewcommand{\theequation}{F.\arabic{equation}}

For a given Heron angle ${\alpha}$, $0<{\alpha}<\frac{\pi}{2}$, we look at the quadratic equation

\begin{equation}
\begin{aligned}
(M^2-1)\sin(2{\alpha}) + 2M\cos(2{\alpha}) = 4D, M>0
\end{aligned}
\end{equation}

or

\begin{equation}
\begin{aligned}
M^2\sin(2{\alpha}) + 2M\cos(2{\alpha}) - [\sin(2{\alpha})+4D] = 0
\end{aligned}
\end{equation}

Then the solutions are given by

\begin{equation}
\begin{aligned}
M_+ = \frac{1}{\sin(2{\alpha})} [-\cos(2{\alpha}) + \Delta] 
\end{aligned}
\end{equation}

\begin{equation}
\begin{aligned}
M_- = \frac{1}{\sin(2{\alpha})} [-\cos(2{\alpha}) - \Delta]
\end{aligned}
\end{equation}

where 

\begin{equation}
\begin{aligned}
{\Delta}^2 =& {\cos^2(2{\alpha})} + \sin(2{\alpha}) [\sin(2{\alpha}) + 4D] \\
{\Delta}^2 =& 1 + 4\sin(2{\alpha})D, {\Delta} \geq 0
\end{aligned}
\end{equation}

or

\begin{equation}
\begin{aligned}
{\Delta}^2 + [\sin(2{\alpha})-D]^2 = 1+[\sin(2{\alpha})+D]^2
\end{aligned}
\end{equation}

According to [8] this relation has the following bijective parameter representation with the parameters $a_1, a_2, {\lambda}$ 

\begin{equation}
\begin{aligned}
{\Delta} &= a_1+{\lambda}a_2 &1&=a_1-{\lambda}a_2
\end{aligned}
\end{equation}

\begin{equation}
\begin{aligned}
\sin(2{\alpha}) - D &= a_2-{\lambda}a_1 &\sin(2{\alpha})+D &= a_2+{\lambda}a_1
\end{aligned}
\end{equation}

Conversely 

\begin{equation}
\begin{aligned}
2a_1 &= 1+\Delta
\end{aligned}
\end{equation}

\begin{equation}
\begin{aligned}
a_2&=\sin(2{\alpha})
\end{aligned}
\end{equation}

\begin{equation}
\begin{aligned}
{\lambda}a_1 &= D
\end{aligned}
\end{equation}

From (F.5) and (F.9) we find that

\begin{equation}
\begin{aligned}
a_1 \neq 0,\hspace{0.25cm}  1+{\lambda}a_2=a_1 \neq 0
\end{aligned}
\end{equation}

Then

\begin{equation}
\begin{aligned}
&a_1=1+{\lambda}a_2 = 1+{\lambda}\sin(2{\alpha})
\end{aligned}
\end{equation}

\begin{equation}
\begin{aligned}
&\Delta=1+2{\lambda}a_2 = 1+2{\lambda}\sin(2{\alpha})
\end{aligned}
\end{equation}

and from (F.3), (F.4) the equation (F.1) is bijectively parameterized by the parameter ${\lambda}$ as

\begin{equation}
\begin{aligned}
&M_+=2{\lambda}+\tan{\alpha} \\
\end{aligned}
\end{equation}

\begin{equation}
\begin{aligned}
&M_-=-2{\lambda}-\cot{\alpha} \\
\end{aligned}
\end{equation}

\begin{equation}
\begin{aligned}
&D={\lambda}[1+{\lambda}\sin(2{\alpha})]
\end{aligned}
\end{equation}

Conversely

\begin{equation}
\begin{aligned}
2{\lambda}=M_+-\tan{\alpha}
\end{aligned}
\end{equation}

\begin{remark}
\hfill \break

1) From (F.13) we find

\begin{equation}
\begin{aligned}
1+{\lambda}\sin(2{\alpha}) \neq 0
\end{aligned}
\end{equation}

2) For the limit

\begin{equation}
\begin{aligned}
{\lambda} \rightarrow 0
\end{aligned}
\end{equation}

we find from (F.15-17) that

\begin{equation}
\begin{aligned}
&M_+ \rightarrow \tan{\alpha} \\
\end{aligned}
\end{equation}

\begin{equation}
\begin{aligned}
&M_- \rightarrow -\cot{\alpha} \\
\end{aligned}
\end{equation}

\begin{equation}
\begin{aligned}
&D \rightarrow 0 
\end{aligned}
\end{equation}

Since the solution of (F.1) has to be positive, the only acceptable solution for the limit (F.20) is 

\begin{equation}
\begin{aligned}
M = M_+
\end{aligned}
\end{equation}

\end{remark}

\clearpage

\bigskip

\noindent\textit{Department of Physics, University of Colorado Boulder, Boulder, CO 80309\\
Walter.Wyss@Colorado.EDU}

\section*{Acknowledgement}
I would like to thank my family for encouragement, especially my wife Yvonne for her patience
and our son Daniel for discussions and typing the manuscript. 

\end{document}